

\documentclass{amsart} 





\RequirePackage[abspath,realmainfile]{currfile}

\usepackage{amsmath,amssymb,amsthm,amsfonts,
amsbsy}
\usepackage{graphicx}
 
\usepackage{enumerate}
\usepackage[final]
{showkeys}

\usepackage{datetime} 
\usepackage{xcolor}

\usepackage{mathrsfs}

\usepackage{hyperref}


\newtheorem{theorem}{Theorem}[section]

\newtheorem{corollary}[theorem]{Corollary}

\theoremstyle{definition}

\theoremstyle{remark}

\numberwithin{equation}{section}


\providecommand{\url}[1]{#1}

\renewcommand{\le}{\leqslant}
\renewcommand{\ge}{\geqslant}

\newcommand{\intr}[2]{\overline{#1,#2}}

\renewcommand{\P}{\operatorname{\mathsf{P}}} 
\newcommand{\E}{\operatorname{\mathsf{E}}}

\newcommand{\ii}[1]{\operatorname{\mathsf{I}}\{#1\}}

\newcommand{\R}{\mathbb{R}}

\renewcommand{\SS}{\mathbb{S}}

\newcommand{\Ga}{\Gamma}

\newcommand{\vp}{\varepsilon}

\newcommand{\ka}{\kappa}

\newcommand{\tX}{\tilde X}

\newcommand{\om}{\omega}
\newcommand{\Om}{\Omega}

\newcommand{\PP}{\mathcal{P}}

\newcommand{\fl}[1]{\lfloor#1\rfloor}

\newcommand{\nnn}[1]{{\left\vert\kern-0.25ex\left\vert\kern-0.25ex\left\vert #1 
    \right\vert\kern-0.25ex\right\vert\kern-0.25ex\right\vert}}

\begin{document}

\title{Multidimensional probability inequalities via spherical symmetry}


\author{Iosif~Pinelis}
\address{Michigan Technological University, USA}
\email{ipinelis@mtu.edu}
%

\subjclass[2010]{60E15; 60B11; 60E10; 60E05}

\keywords{Spherical symmetry; probability identities and inequalities;  von~Bahr--Esseen-type inequalities; random vectors in Hilbert spaces; $p$th~moments of the norm}

%

\begin{abstract}
Spherical symmetry arguments are used to produce a general device to convert identities and inequalities for the $p$th absolute moments of real-valued random variables into the corresponding identities and inequalities for the $p$th moments of the norms of random vectors in Hilbert spaces. Particular results include the following: (i) an expression of the $p$th moment of the norm of such a random vector $X$ in terms of the characteristic functional of $X$; (ii) an extension of a previously obtained von~Bahr--Esseen-type inequality for real-valued random variables with the best possible constant factor to random vectors in Hilbert spaces, still with the best possible constant factor; (iii) an extension of a previously obtained inequality between measures of ``contrast between populations'' and ``spread within
populations'' to random vectors in Hilbert spaces.
\end{abstract}

\maketitle

\section{Introduction}\label{intro}
In this note, we will use averaging with respect to spherically symmetric measures over $\R^d$ to extend certain probability identities and inequalities for real-valued random variables (r.v.'s) to random vectors in Hilbert spaces. 

Let $\mu$ be a finite spherically symmetric measure over $\R^d$, so that $\mu(TB)=\mu(B)$ for all orthogonal transformations $T\colon\R^d\to\R^d$ and Borel sets $B\subseteq\R^d$. Let $g\colon\R\to\R$ be a Borel-measurable function such that $\int_{\R^d}\mu(dt)|g(t\cdot x)|<\infty$ for all $x\in\R^d$, where 
$t\cdot x$ denotes the dot product of vectors $t$ and $x$ in $\R^d$. Let $e_1:=(1,0,\dots,0)\in\R^d$. 
Let $|\cdot|$ denote the Euclidean norm on $\R^d$, for all natural $d$. 

Then, for any $x\in\R^d\setminus\{0\}$ and the unit vector $y:=x/|x|$, 
\begin{equation*}
\begin{aligned}
	H_{g,\mu}(x)&:=\int_{\R^d}\mu(dt)g(t\cdot x) \\ 
	&=\int_{\R^d}\mu(dt)g(|x|t\cdot y) \\ 
	&=\int_{\R^d}\mu(dt)g(|x|t\cdot e_1)=:h_{g,\mu}(|x|), 
\end{aligned}
\end{equation*}
and also $H_{g,\mu}(0)=g(0)\int_{\R^d}\mu(dt)=h_{g,\mu}(0)$,
so that the equality 
\begin{equation}\label{eq:H,h}
	H_{g,\mu}(x)=h_{g,\mu}(|x|) 
\end{equation}
holds for all $x\in\R^d$. It follows that the function $H_{g,\mu}$ is spherically symmetric. 

In particular, for any real $p>0$ and the function $g_p\colon\R\to\R$ defined by the formula 
\begin{equation*}
	g_p(u):=|u|^p
\end{equation*}
for real $u$, identity \eqref{eq:H,h} can be rewritten as 
\begin{equation*}
	|x|^p=\frac1{c_{p,\mu}}\,\int_{\R^d}\mu(dt)|t\cdot x|^p
\end{equation*}
for $x\in\R^d$,  
assuming that 
\begin{equation*}
	c_{p,\mu}:=\int_{\R^d}\mu(dt)|t\cdot e_1|^p\in(0,\infty);
\end{equation*}
the latter condition will hold if e.g.\ $\mu$ is the uniform distribution on the unit sphere $\SS_{d-1}$ in $\R^d$. 

It follows immediately that, for any random vector $X$ in $\R^d$, 
\begin{equation}\label{eq:p,mu}
	\E|X|^p=\frac1{c_{p,\mu}}\,\int_{\R^d}\mu(dt)\E|t\cdot X|^p.  
\end{equation}
Thus, the $p$th moment of the norm $|X|$ of the random vector $X$ is expressed as a mixture of the $p$th absolute moments of the real valued r.v.'s $t\cdot X$. 

Another kind of integral representation of $\E|X|^p$, for any random vector $X$ in $\R^d$ with $\E|X|^p<\infty$, in terms of the distributions of the real-valued r.v.'s $t\cdot X$ for $t\in\R^d$ will be established in Theorem~\ref{th:cf} in Section~\ref{via c.f.}.  

\section{Moments of the norm of a random vector in $\R^d$ via the characteristic functional}\label{via c.f.}

Let $X$ be a random vector in $\R^d$, with the characteristic functional (c.f.) $f_X$, so that 
\begin{equation*}
	f_X(t)=\E e^{it\cdot X}
\end{equation*}
for $t\in\R^d$. 

Take any real $p>0$ which is not an even integer, and let 
\begin{equation*}
	m:=\fl{p/2}. 
\end{equation*}

For nonnegative integers $k$ and real $z$, let 
\begin{equation}\label{eq:c,s}
	c_k(z):=\cos z-\sum_{j=0}^k(-1)^j\frac{z^{2j}}{(2j)!},\quad  
	s_k(z):=\sin z-\sum_{j=0}^k(-1)^j\frac{z^{2j+1}}{(2j+1)!}. 
\end{equation}
Noting that $c'_k=-s_{k-1}$ and $s'_k=c_{k-1}$ if $k\ge1$, and $c_k(0)=0=s_k(0)$, it is easy to check by induction that $(-1)^k c_k(z)\le0$ and $(-1)^k z s_k(z)\le0$, again for all nonnegative integers $k$ and real $z$. 

So, for any nonzero vector $x\in\R^d$ and the unit vector $y:=x/|x|$, by the Tonelli theorem we have 
\begin{align*}
\int_{\R^d}\frac{dt}{|t|^{p+d}}\,c_m(t\cdot x)
&=\int_{\R^d}\frac{dt}{|t|^{p+d}}\,c_m(|x|t\cdot y)	\\ 
&=|x|^p\int_{\R^d}\frac{ds}{|s|^{p+d}}\,c_m(s\cdot y)	\\ 
&=|x|^p\,\Om_d\int_{\SS_{d-1}}d\om\int_0^\infty \frac{dr\,r^{d-1}}{r^{p+d}}\,c_m(r\om\cdot y),  
\end{align*}
where 
\begin{equation}
	\Om_d:=\frac{2\pi^{(d+1)/2}}{\Ga((d+1)/2)} 
\end{equation}
is the surface area of the unit sphere $\SS_{d-1}$ in $\R^d$ and 
$\int_{\SS_{d-1}}d\om\,\cdots$ is the integral with respect to the uniform distribution on $\SS_{d-1}$. 
Using now the spherical symmetry of the uniform distribution on $\SS_{d-1}$ and 
recalling the definition $e_1:=(1,0,\dots,0)\in\SS_{d-1}$, we see that for any nonzero vector $x\in\R^d$ 
\begin{align*}
\int_{\R^d}\frac{dt}{|t|^{p+d}}\,c_m(t\cdot x) 
&=C_{d,p}|x|^p,  
\end{align*}
where 
\begin{align}
	C_{d,p}&:=\Om_d\int_{\SS_{d-1}}d\om\int_0^\infty \frac{dr}{r^{p+1}}\,c_m(r\om\cdot e_1) \notag \\ 
	&=\Om_d\int_{\SS_{d-1}}d\om\int_0^\infty \frac{dr}{r^{p+1}}\,c_m(r|\om\cdot e_1|) 
\label{eq:c even}	\\ 
	&=\Om_d K_p\int_{\SS_{d-1}}d\om\,|\om\cdot e_1|^p  \notag 
\end{align}
and
\begin{equation}\label{eq:K}
	K_p:=\int_0^\infty \frac{dz}{z^{p+1}}\,c_m(z)=-\frac\pi{2\Ga(p+1)\sin(\pi p/2)}; 
\end{equation}
the equality in \eqref{eq:c even} holds because the function $c_m$ is even, and 
the latter equality in \eqref{eq:K} is the special case of formula~(5) in \cite{bahr-converg65} obtained by replacing $x$ there by $1$. 


To evaluate the integral $\int_{\SS_{d-1}}d\om\,|\om\cdot e_1|^p$, note that the distribution of $\om\cdot e_1$ coincides with that of 
\begin{equation*}
	\frac{G_1}{(G_1^2+\dots+G_d^2)^{1/2}}, 
\end{equation*}
where $G_1,\dots,G_d$ are independent standard normal random variables. Hence, \break 
$(\om\cdot e_1)^2$ has the beta distribution with parameters $1/2,(d-1)/2$. So, 
\begin{align*}
	\int_{\SS_{d-1}}d\om\,|\om\cdot e_1|^p
&	=\frac{\Ga(d/2)}{\Ga(1/2)\Ga((d-1)/2)}\,\int_0^1 dz\, z^{p/2-1/2}(1-z)^{(d-1)/2-1} \\ 
&=\frac{\Ga(d/2)\Ga((p+1)/2)}{\sqrt\pi\,\Ga((p+d)/2)}. 
\end{align*}

It follows that 
\begin{equation}\label{eq:C_d,p}
	C_{d,p}=-\frac{\pi^{d/2}}{\sin(\pi p/2)}
	\frac{\Ga(d/2)\Ga((p+1)/2)}{\Ga((d+1)/2)\Ga(p+1)\Ga((p+d)/2)}
\end{equation}
and 
\begin{equation}\label{eq:|x|^p}
	|x|^p=\frac1{C_{d,p}}\,\int_{\R^d}\frac{dt}{|t|^{p+d}}\,c_m(t\cdot x) 
\end{equation}
for all nonzero vectors $x\in\R^d$. Identity \eqref{eq:|x|^p} also trivially holds if $x$ is the zero vector, because $c_m(0)=0$. 

Thus, using the Tonelli theorem again, we immediately obtain the following expression of the $p$th moment of the norm $|X|$ of a random vector $X$ in terms of the c.f.\ $f_X$ of $X$: 

\begin{theorem}\label{th:cf}
For any real $p>0$ which is not an even integer and any random vector $X$ in $\R^d$ with $\E|X|^p<\infty$, 
\begin{equation*}
\begin{aligned}
	\E|X|^p&=\frac1{C_{d,p}}\,\int_{\R^d}\frac{dt}{|t|^{p+d}}\,
	\Big(\Re f_X(t)-\sum_{j=0}^m\frac{(-1)^j}{(2j)!}\,\E(t\cdot X)^{2j}\Big) \\ 
	&=\frac1{C_{d,p}}\,\int_{\R^d}\frac{dt}{|t|^{p+d}}\,
	\Big(\Re f_X(t)-\sum_{j=0}^m\frac{(D_t^{(2j)}f_X)(0)}{(2j)!}\Big), \\ 
\end{aligned}
\end{equation*}
where $C_{d,p}$ is as in \eqref{eq:C_d,p} and  $(D_t^{(2j)}f_X)(0):=\frac{d^{2j}f_X(zt)}{dz^{2j}}\Big|_{z=0}$, the $(2j)$th ``directional'' derivative of $f_X$ at $0$ along the vector $t$. 
\end{theorem}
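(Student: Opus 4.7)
The plan is to apply the pointwise identity \eqref{eq:|x|^p} with $x$ replaced by the random vector $X$, take expectation on both sides, and interchange the expectation with the outer $t$-integral via Fubini--Tonelli. Once this swap is justified, the integrand becomes $\E c_m(t\cdot X)$; by the definition \eqref{eq:c,s} of $c_m$ and linearity of expectation, this equals $\Re f_X(t)-\sum_{j=0}^m\frac{(-1)^j}{(2j)!}\,\E(t\cdot X)^{2j}$, which is the first displayed expression of the theorem. To pass to the second expression I would identify the moments with directional derivatives of $f_X$: differentiating $z\mapsto f_X(zt)=\E e^{izt\cdot X}$ under the expectation sign $2j$ times and setting $z=0$ gives $(D_t^{(2j)}f_X)(0)=i^{2j}\E(t\cdot X)^{2j}=(-1)^j\E(t\cdot X)^{2j}$. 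Differentiation under the expectation is legitimate because $2j\le 2m<p$ together with $\E|X|^p<\infty$ yields $\E|t\cdot X|^{2j}<\infty$, while $|e^{izt\cdot X}|\le 1$ supplies the dominating function for the successive derivatives.

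The principal step is to verify the Fubini hypothesis
\[
\E\int_{\R^d}\frac{|c_m(t\cdot X)|}{|t|^{p+d}}\,dt<\infty.
\]
Repeating the change of variable $s=|x|t$ and passage to spherical coordinates used in deriving \eqref{eq:|x|^p}, and using that $c_m$ is even, one obtains for every $x\in\R^d$ (trivially at $x=0$) the identity
\[
\int_{\R^d}\frac{|c_m(t\cdot x)|}{|t|^{p+d}}\,dt=I_p\,|x|^p,\qquad
I_p:=\Om_d\int_{\SS_{d-1}}d\om\int_0^\infty\frac{|c_m(r|\om\cdot e_1|)|}{r^{p+1}}\,dr.
\]
Finiteness of $I_p$ rests on two standard bounds on $c_m$: near $0$, the Taylor remainder for $\cos$ gives $|c_m(u)|\le|u|^{2m+2}/(2m+2)!$, making the radial integrand $O(r^{2m+1-p})$, integrable because $p<2m+2$; at infinity, the triangle inequality gives $|c_m(u)|\le 1+\sum_{j=0}^m|u|^{2j}/(2j)!$, making it $O(r^{2m-p-1})$, integrable because $p>2m$. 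Taking expectation then yields $\E\int|c_m(t\cdot X)|\,|t|^{-p-d}\,dt=I_p\,\E|X|^p<\infty$, so Tonelli applies.

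The main obstacle is precisely this integrability check. The bound at infinity requires the strict inequality $2m<p$, which is exactly where the hypothesis that $p$ is not an even integer enters in an essential way; its necessity is also visible from the factor $\sin(\pi p/2)$ in the denominator of $C_{d,p}$ in \eqref{eq:C_d,p}, which vanishes precisely for even $p$. Once Fubini is secured, the rest of the argument is the bookkeeping already performed in deriving \eqref{eq:|x|^p}.
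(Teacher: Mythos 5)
Your proof is correct and follows essentially the same route as the paper: apply the pointwise identity \eqref{eq:|x|^p} to $X$, interchange $\E$ with the $t$-integral, expand $\E c_m(t\cdot X)$ by \eqref{eq:c,s}, and identify the even moments with directional derivatives of $f_X$. The only real divergence is how the interchange is justified: the paper simply invokes Tonelli, using the sign property noted right after \eqref{eq:c,s} (by induction, $(-1)^m c_m\le0$, so the integrand has constant sign), which needs no integrability estimate and even propagates the value $\infty$ (cf.\ the parenthetical in Corollary~\ref{cor:0<p<2}); you instead verify absolute integrability, $\int_{\R^d}|c_m(t\cdot x)|\,|t|^{-p-d}\,dt=I_p|x|^p<\infty$, via the bounds $|c_m(u)|\le|u|^{2m+2}/(2m+2)!$ near $0$ and $|c_m(u)|\le1+\sum_{j\le m}|u|^{2j}/(2j)!$ at infinity, and apply Fubini, which is valid under the stated hypothesis $\E|X|^p<\infty$ and correctly isolates where $2m<p<2m+2$ (i.e.\ $p$ not an even integer) enters. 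One small slip: for the second displayed formula the dominating function for the $k$th derivative of $z\mapsto e^{izt\cdot X}$ is $|t\cdot X|^k$ rather than the bound $|e^{izt\cdot X}|\le1$, but since $\E|t\cdot X|^{2m}<\infty$ the standard moment--derivative relation for characteristic functions gives $(D_t^{(2j)}f_X)(0)=(-1)^j\E(t\cdot X)^{2j}$ as you assert, so this does not affect the argument.
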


Formula~(6) in \cite{bahr-converg65} is the special, ``one-dimensional'' case of Theorem~\ref{th:cf} corresponding to $d=1$. 
(For other expressions of the absolute and positive-part moments of r.v.'s in terms of the corresponding characteristic functions, see \cite{positive,pos-part-c.f.s_publ} and references there.) 


Another special case of Theorem~\ref{th:cf} is given by 

\begin{corollary}\label{cor:0<p<2}
For any $p\in(0,2)$ and any random vector $X$ in $\R^d$, 
\begin{equation}\label{eq:|x|^p,p<2}
\begin{aligned}
	\E|X|^p&=\frac1{C_{d,p}}\,\int_{\R^d}\frac{dt}{|t|^{p+d}}\,
	\big(\Re f_X(t)-1\big). 
\end{aligned}
\end{equation}
(If $\E|X|^p=\infty$, then the right-hand side of \eqref{eq:|x|^p,p<2} is $\infty$ as well.)
\end{corollary}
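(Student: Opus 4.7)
The plan is to reduce the corollary directly to the already-established identity \eqref{eq:|x|^p} and apply Tonelli's theorem with a sign check, so that no integrability hypothesis is needed.

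First, observe that for $p\in(0,2)$ one has $m=\fl{p/2}=0$, so the function $c_m$ in \eqref{eq:c,s} is simply $c_0(z)=\cos z-1$. Substituting into \eqref{eq:|x|^p} yields, for every $x\in\R^d$,
\begin{equation*}
|x|^p=\frac1{C_{d,p}}\int_{\R^d}\frac{dt}{|t|^{p+d}}\bigl(\cos(t\cdot x)-1\bigr).
\end{equation*}
Next I would replace $x$ by the random vector $X$ and take the expectation of both sides, the aim being to push $\E$ inside the integral against $dt/|t|^{p+d}$.

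The key step is justifying the interchange via Tonelli, and here the main point is the sign. From \eqref{eq:C_d,p} and the fact that $\sin(\pi p/2)>0$ for $p\in(0,2)$ (while the remaining gamma factors are all positive), we see that $C_{d,p}<0$. Combined with $\cos(t\cdot X)-1\le0$, this makes the integrand $\frac{1}{C_{d,p}|t|^{p+d}}(\cos(t\cdot X)-1)$ nonnegative pointwise. Therefore Tonelli applies unconditionally and gives
\begin{equation*}
\E|X|^p=\frac1{C_{d,p}}\int_{\R^d}\frac{dt}{|t|^{p+d}}\bigl(\E\cos(t\cdot X)-1\bigr)
=\frac1{C_{d,p}}\int_{\R^d}\frac{dt}{|t|^{p+d}}\bigl(\Re f_X(t)-1\bigr),
\end{equation*}
which is \eqref{eq:|x|^p,p<2}. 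Because Tonelli is applied to a nonnegative integrand, the same chain of equalities remains valid in $[0,\infty]$, so the parenthetical assertion about the case $\E|X|^p=\infty$ is automatic.

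The only real obstacle is the sign verification for $C_{d,p}$; everything else is a direct specialization of Theorem~\ref{th:cf}, the novelty being precisely that Tonelli lets us dispense with the finite-moment hypothesis in that theorem. (Note that the identity itself for $\E|X|^p<\infty$ is already a corollary of Theorem~\ref{th:cf} with $m=0$, so the content beyond that theorem is exactly the removal of the integrability assumption.)
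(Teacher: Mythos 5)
Your proof is correct and follows essentially the same route as the paper: specialize the pointwise identity \eqref{eq:|x|^p} to $m=0$ (so $c_m=c_0=\cos(\cdot)-1\le0$), note $C_{d,p}<0$ for $p\in(0,2)$ so the integrand is nonnegative, and apply Tonelli, which is exactly how the paper passes from \eqref{eq:|x|^p} to Theorem~\ref{th:cf} and justifies the parenthetical claim when $\E|X|^p=\infty$. No gaps.
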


\section{An exact von~Bahr--Esseen-type inequality in Hilbert spaces}\label{bahr-esseen}

Given any sequence $(S_j)_{j=1}^n$ of random vectors in a separable Hilbert space $H$ with the corresponding norm $\|\cdot\|$ on $H$, let $X_j:=S_j-S_{j-1}$ denote the corresponding differences, for $j\in\intr1n$, with the convention $S_0:=0$, so that $X_1=S_1$;  
here and in what follows, for any $m$ and $n$ in the set $\{0,1,\dots,\infty\}$ we let $\intr mn$ stand for the set of all integers $i$ such that $m\le i\le n$. 

If $\E\|X_j\|<\infty$ and $\E(X_j|S_{j-1})=0$ for all $j\in\intr2n$, let us say that the sequence $(S_j)_{j=1}^n$ is a \emph{v-martingale} (where ``v'' stands for ``virtual''); in such a case, let us also say that $(X_1,\dots,X_n)$ is a \emph{v-martingale difference sequence}, or simply that the $X_j$'s are v-martingale differences. 
Note that, for a general v-martingale difference sequence $(X_1,\dots,X_n)$, $X_1$ may be any random vector in $H$; in particular, the expectation $\E X_1$ of $X_1$ (if it exists) may or may not be $0$. 
It is clear that any martingale $(S_j)_{j=1}^n$ is a v-martingale.  

In the rest of this section, it will be always assumed that $(S_j)_{j=1}^n$ is a \break 
v-martingale in $H$. 

Take any $p\in(1,2]$. 

In the case when $H=\R$, the best possible constant $C_p$ in the von~Bahr--Esseen-type inequality 
\begin{equation}\label{eq:AFA}
	\E|S_n|^p\le\E|X_1|^p+C_p\sum_{j=2}^n\E|X_j|^p  
\end{equation}
was obtained in \cite{bahr-esseen-AFA_publ} -- see formula~(1.11) there. 
Specifically, it was shown in \cite[Proposition 1.8]{bahr-esseen-AFA_publ} that 
\begin{equation*}
	C_p=\max_{x\in(0,1)}\ell(p,x), 
\end{equation*}
where 
\begin{equation*}
	\ell(p,x):=(1-x)^p-x^p+px^{p-1}. 
\end{equation*}
It was also shown in \cite{bahr-esseen-AFA_publ} that $C_p$ is continuously and strictly decreasing in $p\in(1,2]$ from $C_{1+}=2$ to $C_2=1$. 

Inequality \eqref{eq:AFA} with $2$ in place of $C_p$ was obtained by von~Bahr and Esseen \cite{bahr65}. The constant factor $2$ was somewhat improved in \cite{bahr65} for $p$ in a left neighborhood of $2$; yet, the improved constant factor was not optimal even in that neighborhood. 
The von~Bahr--Esseen inequality has been extended and used in various kinds of studies; see e.g.\ \cite{bahr-esseen-AFA_publ} for a discussion of some of those results. 

As a corollary of \eqref{eq:AFA}, the following inequality was obtained in \cite[Corollary 1.12]{bahr-esseen-AFA_publ}: 
\begin{equation*}
	\E\nnn{T_n}^p\le(\E\nnn{T_n})^p+\ka_p C_p\sum_{i=1}^n\E\nnn{Y_i-y_i}^p,  
\end{equation*}
where 
\begin{equation*}
	T_n:=\sum_{i=1}^n Y_i,
\end{equation*}
$Y_1,\dots,Y_n$ are any independent random vectors in any separable Banach space $(B,\nnn{\cdot})$, $y_1,\dots,y_n$ are any non-random vectors in $B$, and 
\begin{equation*}
	\ka_p:=\max_{c\in[0,1/2]}\big[(c^{p - 1} + (1 - c)^{p - 1}) \big(c^{\frac1{p - 1}} + (1 - c)^{\frac1{p - 1}}\big)^{p - 1}\big].  
\end{equation*}
It was also shown in \cite{bahr-esseen-AFA_publ} that, somewhat similarly to $C_p$, the factor $\ka_p$ continuously and strictly decreases in $p\in(1,2]$ from $2$ to $1$. 

In this section we will show that inequality \eqref{eq:AFA}, previously obtained in the case $H=\R$, extends to arbitrary separable Hilbert spaces $H$, with the same best possible constant $C_p$: 

\begin{theorem}\label{th:bahr-esseen}
\begin{equation}\label{eq:H}
	\E\|S_n\|^p\le\E\|X_1\|^p+C_p\sum_{j=2}^n\E\|X_j\|^p.  
\end{equation}
\end{theorem}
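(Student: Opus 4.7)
The plan is to combine the mixture identity \eqref{eq:p,mu} with the real-valued von~Bahr--Esseen inequality \eqref{eq:AFA} of \cite{bahr-esseen-AFA_publ}, using \eqref{eq:p,mu} as a device for converting real-valued moment inequalities into Hilbert-space ones. We may assume $\E\|X_j\|^p<\infty$ for every $j$, since otherwise the right-hand side of \eqref{eq:H} is infinite. First I would treat the finite-dimensional case $H=\R^d$. Take $\mu$ to be the uniform distribution on $\SS_{d-1}$, so that \eqref{eq:p,mu} holds. The crucial observation is that for each fixed $t\in\R^d$ the real-valued sequence $(t\cdot S_j)_{j=1}^n$ is again a v-martingale, with differences $t\cdot X_j$: from $\E(X_j\mid S_{j-1})=0$ we get $\E(t\cdot X_j\mid S_{j-1})=t\cdot\E(X_j\mid S_{j-1})=0$, whence $\E(t\cdot X_j\mid t\cdot S_{j-1})=0$ by the tower property, using $\sigma(t\cdot S_{j-1})\subseteq\sigma(S_{j-1})$; integrability is inherited from $|t\cdot X_j|\le|t|\,\|X_j\|$. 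Thus \eqref{eq:AFA} gives
\begin{equation*}
\E|t\cdot S_n|^p\le\E|t\cdot X_1|^p+C_p\sum_{j=2}^n\E|t\cdot X_j|^p
\end{equation*}
for every $t\in\R^d$; integrating both sides against $c_{p,\mu}^{-1}\mu(dt)$ and then applying \eqref{eq:p,mu} to $S_n$ and to each $X_j$ (with Tonelli justified by the moment bounds) yields \eqref{eq:H} in $\R^d$.

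Next I would extend from $\R^d$ to a general separable Hilbert space $H$ by finite-dimensional approximation. Fix an orthonormal basis $\{e_k\}$ of $H$ and let $P_d$ denote the orthogonal projection of $H$ onto $\mathrm{span}\{e_1,\dots,e_d\}$. The projected sequence $(P_d S_j)_{j=1}^n$ is a v-martingale in $P_d H\cong\R^d$ with differences $P_d X_j$, since $\E(P_d X_j\mid S_{j-1})=P_d\E(X_j\mid S_{j-1})=0$ and then $\E(P_d X_j\mid P_d S_{j-1})=0$ by the tower property. So the finite-dimensional case just proved applies to $(P_d S_j)_{j=1}^n$. Since $\|P_d X_j\|^p\uparrow\|X_j\|^p$ and $\|P_d S_n\|^p\uparrow\|S_n\|^p$ almost surely as $d\to\infty$, monotone convergence applied to both sides of the resulting inequality yields \eqref{eq:H} in $H$. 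The constant $C_p$ remains best possible because $H=\R$ is already a subcase in which $C_p$ is known to be optimal \cite{bahr-esseen-AFA_publ}.

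The only real subtlety, as far as I can see, is the routine bookkeeping check that the v-martingale property is preserved under the scalar functionals $t\cdot(\cdot)$ and under the projections $P_d$, which in both places reduces to the tower property for conditional expectations. All substantive analytic content has already been packaged into Section~1 and into the real-valued theorem of \cite{bahr-esseen-AFA_publ}, so no new calculation is required.
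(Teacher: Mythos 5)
Your proposal is correct and follows essentially the same route as the paper's own proof: reduce to $H=\R^d$ by applying the real-valued inequality \eqref{eq:AFA} to the v-martingale $(t\cdot S_j)_{j=1}^n$ (justified via the tower property), average over $t$ against the uniform distribution on $\SS_{d-1}$ using \eqref{eq:p,mu} and Tonelli, and then pass to general separable $H$ via the projections $P_d$ and monotone convergence. No gaps; the bookkeeping details you flag (preservation of the v-martingale property under $t\cdot(\cdot)$ and $P_d$) are handled exactly as in the paper.
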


\begin{proof}
%
Consider first the special case when $H=\R^d$, for some natural $d$. Take any $t\in\R^d$. Then the sequence $(t\cdot S_j)_{j=1}^n$ is a v-martingale in $\R$, because 
\begin{equation}\label{eq:t S}
\begin{aligned}
	\E(t\cdot X_j|t\cdot S_{j-1})=t\cdot\E(X_j|t\cdot S_{j-1})
	&=t\cdot\E((X_j|S_{j-1})|t\cdot S_{j-1}) \\ 
	&=t\cdot\E(0|t\cdot S_{j-1})=0  
\end{aligned}	
\end{equation}
for $j\in\intr2n$. So, by \eqref{eq:AFA}, 
\begin{equation*}
	\E|t\cdot S_n|^p\le\E|t\cdot X_1|^p+C_p\sum_{j=2}^n\E|t\cdot X_j|^p.   
\end{equation*}
Using now the Tonelli theorem and \eqref{eq:p,mu} (with $\mu$ being, say, the uniform distribution on the unit sphere $\SS_{d-1}$ in $\R^d$), we have 
\begin{align*}
	c_{p,\mu}\E\|S_n\|^p&=\E\int_{\R^d}\mu(dt)|t\cdot S_n|^p \\ 
	&=\int_{\R^d}\mu(dt)\E|t\cdot S_n|^p \\ 
	&\le\int_{\R^d}\mu(dt)\Big(\E|t\cdot X_1|^p+C_p\sum_{j=2}^n\E|t\cdot X_j|^p\Big) \\ 
	&=\int_{\R^d}\mu(dt)\E|t\cdot X_1|^p
	+C_p\sum_{j=2}^n\int_{\R^d}\mu(dt)\E|t\cdot X_j|^p \\ 
	&=c_{p,\mu}\Big(\E\|X_1\|^p+C_p\sum_{j=2}^n\E\|X_j\|^p\Big).  
\end{align*}
This proves \eqref{eq:H} for $H=\R^d$ and therefore for any finite-dimensional Euclidean space. 

It remains to consider the case when $H$ is infinite dimensional. 
Let $(e_1,e_2,\dots)$ be any orthonormal basis of the separable Hilbert space $H$. For any natural $d$, let $H_d$ be the span of the set $\{e_1,\dots,e_d\}$, so that $\dim H_d=d<\infty$. Let $P_d$ denote the orthogonal projector of $H$ onto $H_d$. 
Note that the sequence $(P_d S_j)_{j=1}^n$ is a v-martingale in $H_d$ -- cf.\ \eqref{eq:t S}. 
Hence, by what has been proved for finite-dimensional Euclidean spaces, 
\begin{equation}\label{eq:P}
	\E\|P_d S_n\|^p\le\E\|P_d X_1\|^p+C_p\sum_{j=2}^n\E\|P_d X_j\|^p.  
\end{equation}
Note also that, for each $x\in H$, $\|P_d x\|$ is nondecreasing in $d$, and $\|P_d x\|\to\|x\|$ as $d\to\infty$. 
Thus, inequality \eqref{eq:H} follows from \eqref{eq:P} by the monotone convergence theorem. 
\end{proof}

\section{Extensions and applications}\label{extensions}

\subsection{A general device to convert inequalities for the $p$th absolute moments of real-valued random variables into the corresponding inequalities for the $p$th moments of the norms of random vectors in Hilbert spaces}\label{device}

Let $\PP(X_1,\dots,X_n)$ denote some property of random vectors $X_1,\dots,X_n$ in a separable Hilbert space stated in terms applicable to all separable Hilbert spaces. 
Let us say that 
$\PP(X_1,\dots,X_n)$ is \emph{linearly invariant} if for any separable Hilbert space $G$ and any bounded linear operator $A\colon H\to G$ we have the implication 
\begin{equation*}
	\PP(X_1,\dots,X_n)\implies\PP(AX_1,\dots,AX_n).
\end{equation*}

For instance, any one of the following three properties is linearly invariant: 
\begin{enumerate}[(i)]
	\item $(X_1,\dots,X_n)$ is a v-martingale difference sequence;
	\item $X_1,\dots,X_n$ are independent;
	\item $X_1,\dots,X_n$ are symmetric(ally distributed). 
\end{enumerate}

Clearly, the reasoning in 
the proof of Theorem~\ref{th:bahr-esseen} 
can be extended to yield the following general result: 

\begin{theorem}\label{th:general}
Let $\PP(X_1,\dots,X_n)$ be a linearly invariant property of random vectors $X_1,\dots,X_n$ in a separable Hilbert space. 
Suppose that the implication
\begin{equation}\label{eq:implic1}
	\PP(Y_1,\dots,Y_n)\implies\sum_{k=1}^K a_k \E\Big|\sum_{j=1}^n b_{k,j}Y_j\Big|^p\ge0
\end{equation}
holds for some real $p>0$, some natural $K$, some vector $(a_k\colon k\in\intr1K)\in\R^K$, some matrix $(b_{k,j}\colon k\in\intr1K,\,j\in\intr1n)\in\R^{K\times n}$, and all real-valued r.v.'s $Y_1,\dots,Y_n$. Then 
\begin{equation}\label{eq:implic2}
	\PP(X_1,\dots,X_n)\implies\sum_{k=1}^K a_k \E\Big\|\sum_{j=1}^n b_{k,j}X_j\Big\|^p\ge0
\end{equation} 
for all random vectors $X_1,\dots,X_n$ in any separable Hilbert space. 

(For convenience, we assume here that the inequality in \eqref{eq:implic1} always holds if at least one of the $K$ summands in the sum there equals $\infty$ -- even if another summand in that sum equals $-\infty$. The similar convention is assumed for \eqref{eq:implic2}. That is, we assume that the inequalities in \eqref{eq:implic1} and \eqref{eq:implic2} are/can be rewritten in the more general way so that the summands with $a_k>0$ be retained on the left and the summands with $a_k<0$ be moved to the right. This allows us to avoid requiring the finiteness of the $p$th moments.)
\end{theorem}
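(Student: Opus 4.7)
The proof will closely mimic that of Theorem~\ref{th:bahr-esseen}: first reduce the Hilbert-space case to the real-valued case by averaging over the unit sphere in a finite-dimensional setting, then pass to a general separable Hilbert space by orthogonal projection and monotone convergence. The hypothesis \eqref{eq:implic1} here plays the role that \eqref{eq:AFA} plays in the earlier proof, and linear invariance of $\PP$ replaces the ad hoc verification in \eqref{eq:t S}.

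\textbf{Finite-dimensional step.} Fix $d\in\N$ and suppose first that $H=\R^d$. For each $t\in\R^d$, the map $x\mapsto t\cdot x$ is a bounded linear operator $\R^d\to\R$, so linear invariance gives $\PP(t\cdot X_1,\dots,t\cdot X_n)$, and the hypothesis \eqref{eq:implic1} applied to $Y_j:=t\cdot X_j$ yields
$$\sum_{k=1}^K a_k\,\E|t\cdot Z_k|^p\ge 0,\qquad\text{where }\ Z_k:=\sum_{j=1}^n b_{k,j}X_j.$$
Integrating this against the uniform measure $\mu$ on $\SS_{d-1}$ and applying Tonelli together with identity \eqref{eq:p,mu} to each term converts $\int_{\R^d}\mu(dt)\,\E|t\cdot Z_k|^p$ into $c_{p,\mu}\E\|Z_k\|^p$, and dividing by $c_{p,\mu}>0$ yields the conclusion \eqref{eq:implic2} for $H=\R^d$, hence also for any finite-dimensional Euclidean space.

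\textbf{Infinite-dimensional step.} Let $H$ be infinite-dimensional and separable, fix an orthonormal basis $(e_1,e_2,\dots)$, and let $P_d$ denote the orthogonal projector of $H$ onto $H_d:=\operatorname{span}\{e_1,\dots,e_d\}$. Each $P_d$ is bounded linear, so linear invariance gives $\PP(P_dX_1,\dots,P_dX_n)$, and the finite-dimensional step applied in $H_d$ yields
$$\sum_{k=1}^K a_k\,\E\|P_dZ_k\|^p\ge 0.$$
Since $\|P_dz\|$ is nondecreasing in $d$ and converges to $\|z\|$ for every $z\in H$, the monotone convergence theorem gives $\E\|P_dZ_k\|^p\nearrow\E\|Z_k\|^p$ for each $k$, and passing $d\to\infty$ yields \eqref{eq:implic2}.

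\textbf{Main subtlety.} The only nonroutine point is the $\infty-\infty$ sign convention: some $a_k$ may be negative, and some of the quantities $\E\|Z_k\|^p$ or $\E\|P_dZ_k\|^p$ may be infinite. To make both steps above rigorous without extra moment assumptions, I would rewrite each inequality by moving the terms with $a_k<0$ to the right-hand side, so that every quantity appearing on either side is a nonnegative real or $+\infty$. With that rearrangement, Tonelli applies as an identity for nonnegative integrands in the finite-dimensional step, and monotone convergence applies term-by-term in the infinite-dimensional step, so both sides converge in $[0,\infty]$ to the desired limits, giving \eqref{eq:implic2} in the sense specified by the stated convention.
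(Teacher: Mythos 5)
Your proof is correct and follows the same route as the paper, which proves this theorem simply by noting that the argument for Theorem~\ref{th:bahr-esseen} extends verbatim: reduce to $\R^d$ by applying the real-valued hypothesis to $t\cdot X_j$ (linear invariance), average over the sphere via \eqref{eq:p,mu} and Tonelli, then pass to infinite dimensions with the projectors $P_d$ and monotone convergence. Your explicit handling of the sign convention by moving the $a_k<0$ terms to the right is exactly the rearrangement the paper's parenthetical remark intends.
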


Thus, Theorem~\ref{th:general} provides a general device to convert inequalities for the absolute $p$th moments of real-valued random variables into corresponding 
inequalities for the $p$th moments of the norms of random vectors in Hilbert spaces. 


Now Theorem~\ref{th:bahr-esseen} can be viewed as a particular case of Theorem~\ref{th:general} -- corresponding to the following settings: $p\in(1,2]$; $\PP(X_1,\dots,X_n)$ meaning that $(X_1,\dots,X_n)$ is a v-martingale difference sequence; $K=n+1$; $a_1=1$, $a_2=\cdots=a_n=C_p$, $a_{n+1}=-1$; $b_{k,j}=\ii{k=j}$ for $(k,j)\in\intr1n\times\intr1n$ and $b_{n+1,j}=1$ for $j\in\intr1n$, where $\ii\cdot$ denotes the indicator. 

\subsection{Applications to von~Bahr--Esseen-type inequalities}\label{Bahr}

Here is another special case of Theorem~\ref{th:general}:  

\begin{corollary}\label{cor:symm}
Take any $p\in[1,2]$. 
Suppose that $(X_1,\dots,X_n)$ is a v-martingale difference sequence in a separable Hilbert space $H$ such that the conditional distribution of $X_j$ given $S_{j-1}$ is symmetric for each $j\in\intr2n$. Then (cf.\ \eqref{eq:H})
\begin{equation*}
	\E\|S_n\|^p\le\sum_{j=1}^n\E\|X_j\|^p.  
\end{equation*} 
\end{corollary}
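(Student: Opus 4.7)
The plan is to invoke Theorem~\ref{th:general}, mirroring the derivation of Theorem~\ref{th:bahr-esseen}. First, I would verify that the property $\PP(X_1,\dots,X_n)$ reading ``$(X_1,\dots,X_n)$ is a v-martingale difference sequence and the conditional distribution of $X_j$ given $S_{j-1}$ is symmetric for every $j\in\intr2n$'' is linearly invariant. For any bounded linear operator $A\colon H\to G$, the inclusion $\sigma(AS_{j-1})\subseteq\sigma(S_{j-1})$ and the tower property give $\E(AX_j\,|\,AS_{j-1})=\E\bigl(A\E(X_j\,|\,S_{j-1})\,\big|\,AS_{j-1}\bigr)=0$, while the joint equality in distribution $(X_j,S_{j-1})\stackrel{d}{=}(-X_j,S_{j-1})$ passes through $A$ to give $(AX_j,AS_{j-1})\stackrel{d}{=}(-AX_j,AS_{j-1})$, yielding conditional symmetry of $AX_j$ given $AS_{j-1}$.

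It then suffices to prove the real-valued analogue: for $p\in[1,2]$ and real-valued v-martingale differences $Y_1,\dots,Y_n$ with $Y_j$ conditionally symmetric given $S_{j-1}$ for $j\ge 2$, the estimate $\E|S_n|^p\le\sum_{j=1}^n\E|Y_j|^p$ holds. The key ingredient is the Clarkson-type inequality
\[
|a+b|^p+|a-b|^p\le 2|a|^p+2|b|^p\qquad(a,b\in\R,\ p\in(0,2]),
\]
which I would obtain in two lines: Jensen applied to the concave map $t\mapsto t^{p/2}$ on $[0,\infty)$ yields $\tfrac12\bigl[(a+b)^p+(a-b)^p\bigr]\le(a^2+b^2)^{p/2}$, and subadditivity of the same map (valid because $p/2\le 1$) gives $(a^2+b^2)^{p/2}\le|a|^p+|b|^p$. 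Using $Y_j\stackrel{d}{=}-Y_j$ conditionally on $S_{j-1}$ and then applying this inequality inside the conditional expectation gives
\[
\E(|S_j|^p\,|\,S_{j-1})=\tfrac12\E\bigl(|S_{j-1}+Y_j|^p+|S_{j-1}-Y_j|^p\,\big|\,S_{j-1}\bigr)\le|S_{j-1}|^p+\E(|Y_j|^p\,|\,S_{j-1});
\]
taking unconditional expectations and telescoping in $j$ (noting $|S_1|^p=|Y_1|^p$) delivers the real-valued estimate.

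The one-dimensional bound is precisely in the template of \eqref{eq:implic1} with $K=n+1$, $a_1=\cdots=a_n=1$, $a_{n+1}=-1$, $b_{k,j}=\ii{k=j}$ for $k\in\intr1n$, and $b_{n+1,j}=1$ for $j\in\intr1n$. Theorem~\ref{th:general} then transfers the estimate verbatim to any separable Hilbert space $H$, giving the desired $\E\|S_n\|^p\le\sum_{j=1}^n\E\|X_j\|^p$. The one piece of genuine analysis is the Clarkson-type inequality, which is elementary; the main care needed is in verifying the linear invariance of the conditional-symmetry clause, i.e.\ checking that conditioning on the coarser $\sigma$-algebra generated by $AS_{j-1}$ rather than by $S_{j-1}$ preserves both the martingale property and the symmetry.
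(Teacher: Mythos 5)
Your argument is correct and follows essentially the same route as the paper: verify that the property (v-martingale differences with conditional symmetry given $S_{j-1}$) is linearly invariant and then transfer the real-valued bound via Theorem~\ref{th:general}. The only difference is that you re-prove the one-dimensional case from the Clarkson-type inequality rather than citing Theorem~1 of \cite{bahr65}, which is exactly how that result was originally obtained, so this is a self-contained version of the same proof.
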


The particular case of Corollary~\ref{cor:symm} with $H=\R$ is Theorem~1 in \cite{bahr65}. So, Corollary~\ref{cor:symm} follows by Theorem~\ref{th:general}, since the property of $(X_1,\dots,X_n)$ assumed in the second sentence of Corollary~\ref{cor:symm} is linearly invariant. 

Theorem~1 in \cite{bahr65} was obtained as an almost immediate corollary of one of the celebrated Clarkson inequalities -- see the first inequality in \cite[formula~(3)]{clarkson}, reversed for $p\in(1,2]$. That inequality by Clarkson can be stated as 
\begin{equation}\label{eq:clarkson}
\E|x+\vp y|^p\le|x|^p+|y|^p, 	
\end{equation}
where $x$ and $y$ are real numbers and $\vp$ is a Rademacher r.v., so that $\P(\vp=1)=\P(\vp=-1)=1/2$. 

Corollary~\ref{cor:symm} can also be obtained directly, without using \cite[Theorem~1]{bahr65} or Theorem~\ref{th:general} in the present paper. To do that, instead of the Clarkson inequality \eqref{eq:clarkson}, use the inequality 
\begin{equation}\label{eq:ineq symm}
	\E\|x+\vp y\|^p\le\|x\|^p+\|y\|^p
\end{equation}
for $p\in(1,2]$ and $x,y$ in $H$. To prove \eqref{eq:ineq symm}, just note that 
\begin{equation*}
	\E\|x+\vp y\|^p\le(\E\|x+\vp y\|^2)^{p/2}
	=(\|x\|^2+\|y\|^2)^{p/2}
	\le\|x\|^p+\|y\|^p. 
\end{equation*}




\begin{corollary}\label{cor:X-tX}
Take any $p\in[1,2]$. Let $X$ and $\tX$ be independent identically distributed random vectors in $H$. Then 
\begin{equation}\label{eq:symm-ed}
	\E\|X-\tX\|^p\le2\E\|X\|^p.  
\end{equation} 
\end{corollary}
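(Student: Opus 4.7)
The plan is to reduce to the real-valued case and then invoke the general transfer device of Theorem~\ref{th:general}. The property that $X_1,X_2$ are i.i.d.\ is linearly invariant: independence and equidistribution are both preserved by any bounded linear operator $A\colon H\to G$. Applying Theorem~\ref{th:general} with $K=2$, $a_1=2$, $a_2=-1$, $b_{1,1}=1$, $b_{1,2}=0$, $b_{2,1}=1$, $b_{2,2}=-1$, it therefore suffices to establish the real-valued analogue
\begin{equation*}
	\E|Y-\tilde Y|^p\le 2\E|Y|^p
\end{equation*}
for all i.i.d.\ real-valued r.v.'s $Y,\tilde Y$ and all $p\in[1,2]$.

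For $p\in[1,2)$ I would derive this from Corollary~\ref{cor:0<p<2}. Since $Y$ and $\tilde Y$ are i.i.d., the c.f.\ of $Y-\tilde Y$ factors as $f_{Y-\tilde Y}(t)=f_Y(t)\,\overline{f_Y(t)}=|f_Y(t)|^2$, which is real-valued. Applying \eqref{eq:|x|^p,p<2} to both $Y$ and $Y-\tilde Y$ and taking the appropriate linear combination, I obtain
\begin{equation*}
	2\E|Y|^p-\E|Y-\tilde Y|^p=\frac{-1}{C_{1,p}}\int_\R\frac{dt}{|t|^{p+1}}\,|1-f_Y(t)|^2
\end{equation*}
via the pointwise identity $2\Re f_Y(t)-|f_Y(t)|^2-1=-|1-f_Y(t)|^2$. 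Inspection of \eqref{eq:C_d,p} shows that $C_{1,p}<0$ throughout $(0,2)$ (the only negative factor there is $-1/\sin(\pi p/2)$), so the right-hand side is nonnegative. The boundary case $p=2$ is handled separately by the trivial computation $\E|Y-\tilde Y|^2=2\bigl(\E Y^2-(\E Y)^2\bigr)\le 2\E Y^2$.

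The main subtlety I expect is the sign bookkeeping: since $C_{1,p}$ is negative on $(0,2)$ and the integrand $\Re f_Y(t)-1$ in \eqref{eq:|x|^p,p<2} is likewise nonpositive, one has to track signs carefully so that after subtraction the combined integrand reassembles as $-|1-f_Y(t)|^2$ with the correct overall sign. Modulo that bookkeeping the argument is routine: no finiteness hypothesis is needed (Corollary~\ref{cor:0<p<2} is valid even when $\E|Y|^p=\infty$, in which case \eqref{eq:symm-ed} holds trivially), and the passage to an arbitrary separable Hilbert space is automatic via Theorem~\ref{th:general}.
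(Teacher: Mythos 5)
Your proof is correct and follows essentially the paper's own route: the paper likewise deduces \eqref{eq:symm-ed} from the real-valued case via Theorem~\ref{th:general}, and offers as an alternative exactly the derivation from Corollary~\ref{cor:0<p<2} that you carry out (the identity $2\Re f_Y(t)-|f_Y(t)|^2-1=-|1-f_Y(t)|^2$ with $C_{1,p}<0$, which is how von~Bahr and Esseen obtain their Lemma~4 from their Lemma~2). The only difference is that you make the real-valued step self-contained rather than citing \cite{bahr65}, and your sign bookkeeping and the trivial handling of $p=2$ and of the infinite-moment case are all in order.
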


The weaker version of \eqref{eq:symm-ed} with $2^{p-1}$ in place of $2$ follows immediately from the norm inequality $\|X-\tX\|\le\|X\|+\|\tX\|$ and the inequality $(a+b)^p\le2^{p-1}(a^p+b^p)$ for nonnegative $a$ and $b$. 

The special case of \eqref{eq:symm-ed} for $H=\R$ is the first inequality in formula~(10) in Lemma~4 in \cite{bahr65}. Now Corollary~\ref{cor:X-tX} follows by Theorem~\ref{th:general}. 

Alternatively, Corollary~\ref{cor:X-tX} follows from Corollary~\ref{cor:0<p<2}, the same way \cite[Lemma~4]{bahr65} (which is a special case of Corollary~\ref{cor:X-tX}) follows from \cite[Lemma~2]{bahr65} (which is a special case of Corollary~\ref{cor:0<p<2}). 

\medskip

Comparatively recently, the von~Bahr--Esseen inequality was extended to pairwise independent r.v.'s \cite{PChen1,PChen2}. In this regard, let us present 

\begin{theorem}\label{th:pairwise} 
Let $X_1,\dots,X_n$ be pairwise independent zero-mean random vectors in a separable Hilbert space $H$. Then 
\begin{equation}\label{eq:pair}
	\E\|S_n\|^p\le\frac4{2-p}\sum_{j=1}^n\E\|X_j\|^p  
\end{equation}
for $p\in[1,2)$. 

If the $X_j$'s are symmetric, then the factor $\frac4{2-p}$ in \eqref{eq:pair} can be replaced by $\frac2{2-p}$. 
\end{theorem}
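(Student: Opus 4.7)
The plan is to apply Theorem~\ref{th:general}, which is designed precisely for situations of this kind. The first step is to verify that the property ``$(X_1,\dots,X_n)$ are pairwise independent zero-mean random vectors'' is linearly invariant. For any bounded linear operator $A\colon H\to G$, each $AX_j$ is $\sigma(X_j)$-measurable, so pairwise independence is preserved; and, whenever $\E\|X_j\|<\infty$, one has $\E\|AX_j\|\le\|A\|\,\E\|X_j\|<\infty$ together with $\E AX_j=A\E X_j=0$. Adding the symmetry requirement is also linearly invariant, since $A(-X_j)=-AX_j$ shows $AX_j\stackrel{d}{=}-AX_j$ whenever $X_j\stackrel{d}{=}-X_j$.

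The second step is to invoke the real-valued versions of both inequalities, which are the content of \cite{PChen1,PChen2}: for pairwise independent zero-mean $Y_1,\dots,Y_n\in\R$ and $p\in[1,2)$,
\begin{equation*}
\E\Big|\sum_{j=1}^n Y_j\Big|^p\le\frac{4}{2-p}\sum_{j=1}^n\E|Y_j|^p,
\end{equation*}
with the factor improved to $\frac{2}{2-p}$ when the $Y_j$'s are additionally symmetric.

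The third step is to cast each such one-dimensional inequality in the form \eqref{eq:implic1}. Take $K=n+1$, $a_1=-1$, and $a_k=\frac{4}{2-p}$ for $k\in\intr2{n+1}$ (respectively $a_k=\frac{2}{2-p}$ in the symmetric case); set $b_{1,j}=1$ for $j\in\intr1n$, and $b_{k,j}=\ii{k-1=j}$ for $k\in\intr2{n+1}$ and $j\in\intr1n$. Theorem~\ref{th:general} then yields \eqref{eq:implic2}, which is exactly \eqref{eq:pair} (or its symmetric refinement) in every separable Hilbert space $H$. By the convention stated in Theorem~\ref{th:general}, the trivial case in which some $\E\|X_j\|^p=\infty$ requires no separate treatment.

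The main obstacle, if any, lies in confirming that the one-dimensional bounds in \cite{PChen1,PChen2} carry exactly the constants $\frac{4}{2-p}$ and $\frac{2}{2-p}$ claimed here. Should the cited references state these inequalities in a slightly different form, a brief truncation argument closes the gap: split $X_j=X_j'+X_j''$ at level $t$, use orthogonality from pairwise independence to control $\E|S_n'|^p\le(\E|S_n'|^2)^{p/2}$ via the truncated second moments (bounded by $t^{2-p}\sum_j\E|X_j|^p$), bound $\E|S_n''|^p$ via the triangle inequality and the elementary estimate $\E|X|\,\ii{|X|>t}\le t^{1-p}\E|X|^p$, and optimize in $t$; the symmetric case saves a factor of $2$ by obviating the recentering of the truncations.
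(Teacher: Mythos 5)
Your overall strategy---prove a one-dimensional inequality and lift it to $H$ via Theorem~\ref{th:general}, whose hypotheses you verify correctly---is indeed the paper's strategy, but the step where you invoke the real-valued inequality is a genuine gap: the bounds in \cite{PChen1,PChen2} do \emph{not} carry the constants $\frac4{2-p}$ and $\frac2{2-p}$. As the paper itself explains, \cite{PChen1} yields the constant $K_1(p)\ge\frac4{(2-p)^2}+2$ and \cite{PChen2} yields $K_2(p)=\frac4{p-1}+\frac8{2-p}$, and $\min(K_1(p),K_2(p))$ is at least twice $\frac4{2-p}$ for all $p\in[1,2)$; obtaining the better constant is precisely the content of Theorem~\ref{th:pairwise}. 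So the one-dimensional inequality with the stated constants is itself something that must be proved, and your main route defers it to references that do not contain it.

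Your fallback truncation sketch does not close this gap as written. With a fixed truncation level $t$, the small-part bound $\E|S_n'|^p\le(\E|S_n'|^2)^{p/2}\le\bigl(t^{2-p}\sum_j\E|X_j|^p\bigr)^{p/2}$ carries the exponent $p/2$ outside the sum and so is not of the required linear form $C\sum_j\E|X_j|^p$; and for $p>1$ the triangle inequality together with $\E|X|\ii{|X|>t}\le t^{1-p}\E|X|^p$ controls $\E|S_n''|$, not $\E|S_n''|^p$. The paper's proof instead truncates at a level proportional to $x$ inside the tail-integral representation $\E|S_n|^p=\int_0^\infty px^{p-1}\P(|S_n|\ge x)\,dx$: for symmetric real-valued summands, truncation preserves pairwise independence and mean zero, so Chebyshev gives $\P(|S_n|\ge x)\le\sum_j\P(|X_j|\ge cx)+x^{-2}\sum_j\E X_j^2\ii{|X_j|<cx}$, and integrating and optimizing (at $c=1$) produces exactly $\frac2{2-p}$, which Theorem~\ref{th:general} then lifts to $H$. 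Moreover, the passage from the symmetric to the general zero-mean case is not achieved by ``obviating the recentering of truncations'' but by symmetrization at the vector level: $\E\|S_n\|^p\le\E\|S_n-\tilde S_n\|^p$ by conditional Jensen (here $\tilde S_n$ is an independent copy), after which the symmetric result and Corollary~\ref{cor:X-tX} give the extra factor $2$, i.e.\ $\frac4{2-p}$. Without an argument of this kind, your proposal establishes the theorem only modulo an unproved (and, as attributed, incorrect) one-dimensional input.
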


\begin{proof}
Consider first the case when $X_1,\dots,X_n$ are symmetric real-valued r.v.'s. Then the r.v.'s $X_j\ii{|X_j|<cx}$ are pairwise independent and zero-mean for all positive $c$ and $x$. Hence, by the standard truncation argument and Chebyshev's inequality, 
\begin{equation*}
	\P(|S_n|\ge x)\le\sum_{j=1}^n\P(|X_j|\ge cx)+\frac1{x^2}\sum_{j=1}^n\E X_j^2\ii{|X_j|<cx}. 
\end{equation*}
So, 
\begin{equation*}
\begin{aligned}
	\E|S_n|^p&=\int_0^\infty px^{p-1}\P(|S_n|\ge x)\,dx \\ 
	&\le\int_0^\infty px^{p-1}\Big(\sum_{j=1}^n\P(|X_j|\ge cx)+\frac1{x^2}\sum_{j=1}^n\E X_j^2\ii{|X_j|<cx}\Big)\,dx \\ 
	&=\frac1{c^p}\sum_{j=1}^n\E|X_j|^p
	+
	\sum_{j=1}^n \E\int_0^\infty px^{p-3}X_j^2\ii{|X_j|<cx}\,dx \\ 
	&=\frac1{c^p}\sum_{j=1}^n\E|X_j|^p
	+
	\sum_{j=1}^n \E X_j^2\int_{|X_j|/c}^\infty px^{p-3}\,dx \\ 
	&=\Big(\frac1{c^p}+\frac p{2-p}\,c^{2-p}\Big)\sum_{j=1}^n\E|X_j|^p. 
\end{aligned}
\end{equation*}
The minimum of the latter expression is attained at $c=1$. 
So, inequality \eqref{eq:pair} with the twice better constant factor $\frac2{2-p}$ (in place of $\frac4{2-p}$) holds when $X_1,\dots,X_n$ are symmetric pairwise independent real-valued r.v.'s. 
So, by Theorem~\ref{th:general}, \eqref{eq:pair} with constant factor $\frac2{2-p}$ holds when $X_1,\dots,X_n$ are symmetric pairwise independent random vectors in $H$, since the property that $X_1,\dots,X_n$ are symmetric pairwise independent random vectors is linearly invariant. 

Finally, by a standard symmetrization argument and in view of Corollary~\ref{cor:X-tX}, \eqref{eq:pair} holds for any pairwise independent zero-mean random vectors $X_1,\dots,X_n$ in $H$. 
%
%
\end{proof}

In the case when $H=\R$, inequality \eqref{eq:pair} was obtained in \cite{PChen1} but with the constant factor 
\begin{equation*}
	K_1(p):=\min_{\vp>0}\Big(\frac{4 (\vp+1)^2}{\vp^2 (2-p)^2}+\vp+2\Big)  
\end{equation*}
in place of $\frac4{2-p}$. 
Note that $K_1(p)$ is a root of a certain polynomial whose coefficients are polynomials in $p$. Clearly, $K_1(p)\ge\frac{4}{(2-p)^2}+2$, so that $K_1(p)$ explodes to $\infty$ inversely-quadratically fast as $p\uparrow2$. 
The constant factor $K_1(p)$ was partially improved \cite[Corollary~2.1]{PChen2} to 
\begin{equation*}
	K_2(p):=\frac{4}{p-1}+\frac{8}{2-p}.  
\end{equation*}
Indeed, $K_2(p)$ explodes to $\infty$ only inversely-linearly fast as $p\uparrow2$. However, in contrast with $K_1(p)$, $K_2(p)$ explodes to $\infty$ also when $p\downarrow1$. 

It is not hard to see that, for all $p\in[1,2)$, the constant factor $\frac4{2-p}$ in \eqref{eq:pair} is at least as twice as good (that, at least as twice as small) as the combined factor $K_{12}(p):=\min(K_1(p),K_2(p))$. This remark is illustrated in Figure~\ref{fig:}. 
%
\begin{figure}[htbp]
	\centering
		\includegraphics[width=.6\textwidth]{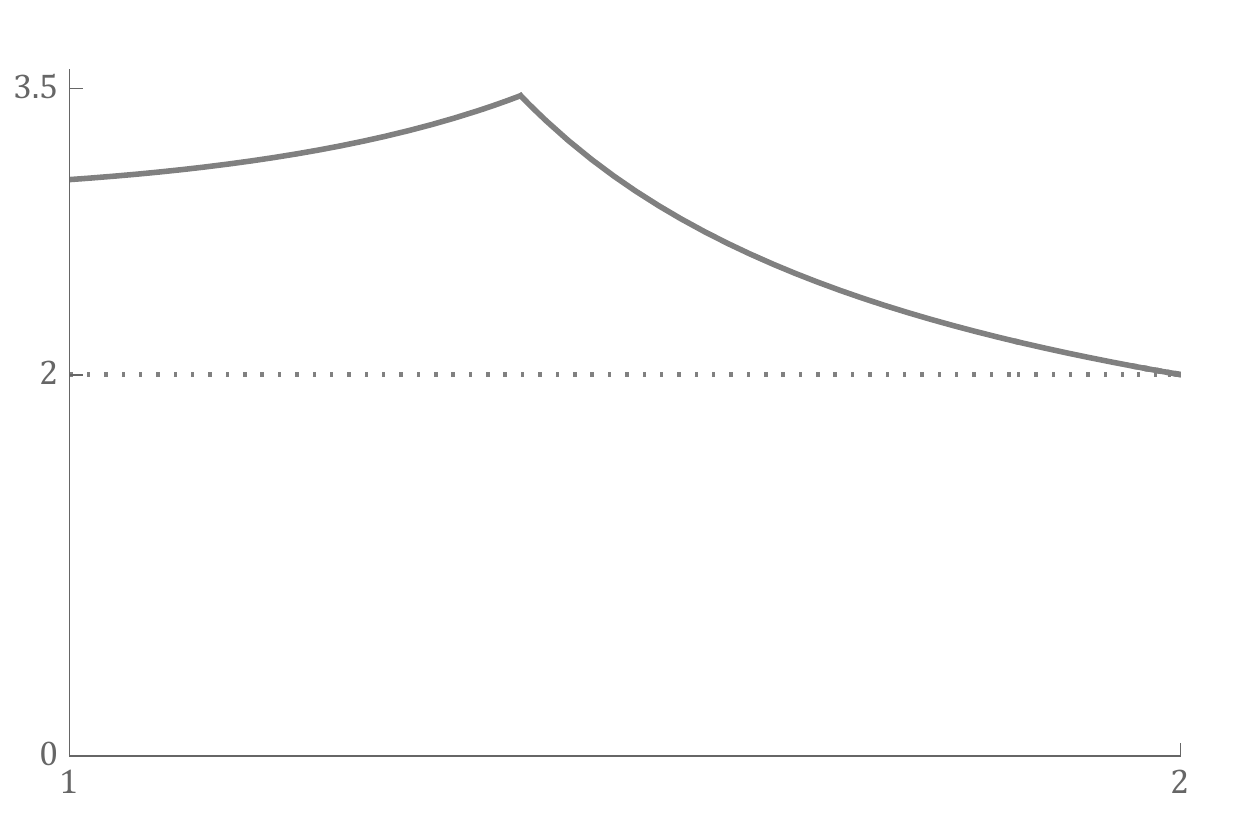}
	\caption{Graph of the ratio $K_{12}(p)\Big/\dfrac4{2-p}$ for $p\in(1,2)$.} \label{fig:}
\end{figure}

One may also recall here that the just mentioned results in \cite{PChen1,PChen2} were obtained only for $H=\R$. 

It remains 
an \emph{open question}
whether it is possible to get rid of an explosion to $\infty$ for $p\uparrow2$ of the constant factor in the von~Bahr--Esseen-type inequality for pairwise independent zero-mean r.v.'s (or, equivalently, for pairwise independent zero-mean random vectors in any separable Hilbert space). Ideas from \cite{exactly-one-hardcopy} -- where it was shown that the condition of pairwise independence may be starkly different from the complete independence condition -- 
may turn out to be of use in this regard. However, whatever constant factor one would be able to obtain in a von~Bahr--Esseen-type inequality for pairwise independent random vectors $X_j$ in  $H=\R$, the same constant factor would be available for any separable Hilbert space $H$.  

Pairwise independence plays a notable role in theoretical computer science; see e.g.\ \cite{luby-wigderson}. 

\subsection{Contrast between populations versus spread within populations for vector measurements}

Yet another corollary of Theorem~\ref{th:general} is as follows: 

\begin{corollary}\label{cor:spread}
Take any real $p\in(0,4]$. 
Let $X$ and $Y$ be independent random vectors in a separable Hilbert space $H$. Let $X_1,X_2$ be independent copies of $X$, and let $Y_1,Y_2$ be independent copies of $Y$. 
Assume that $\E\|X\|^p+\E\|Y\|^p<\infty$. For $p>2$, assume also that $\E X=\E Y$. 

Let 
\begin{equation*}
	D_p:=\E\|X-Y\|^p-\tfrac12(\E\|X_1-X_2\|^p+\E\|Y_1-Y_2\|^p). 
\end{equation*}
Then 
\begin{equation*}
	D_p\ge0\text{ for }p\in(0,2],\quad D_p\le0\text{ for }p\in[2,4]. 
\end{equation*}
\end{corollary}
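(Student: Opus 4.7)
The plan is to apply Theorem~\ref{th:general} to reduce the claim to the one-dimensional case and then use the characteristic functional representations from Section~\ref{via c.f.}. Relabel the four vectors $X_1,X_2,Y_1,Y_2$ as $Z_1,\dots,Z_4$, and let $\PP(Z_1,\dots,Z_4)$ be the property that $Z_1,\dots,Z_4$ are independent with $Z_1\sim Z_2$ and $Z_3\sim Z_4$ (and, when $p>2$, also $\E Z_1=\E Z_3$). This property is linearly invariant: independence, equality in distribution, and expectations are all preserved under any bounded linear operator. Since
\[
D_p=\E\|Z_1-Z_3\|^p-\tfrac12\E\|Z_1-Z_2\|^p-\tfrac12\E\|Z_3-Z_4\|^p
\]
has exactly the shape covered by Theorem~\ref{th:general}, it will suffice to prove $D_p\ge 0$ on $(0,2]$ and $-D_p\ge 0$ on $[2,4]$ for real-valued $Y_1,\dots,Y_4$ satisfying $\PP$.

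In that real-valued setting I would next note that $f_{Y_1-Y_3}=f_X\overline{f_Y}$, while $f_{Y_1-Y_2}=|f_X|^2$ and $f_{Y_3-Y_4}=|f_Y|^2$, and then apply Corollary~\ref{cor:0<p<2} for $p\in(0,2)$ and Theorem~\ref{th:cf} with $m=1$ for $p\in(2,4)$. In either range the three integrands combine cleanly: the constants $-1$ telescope, and for $p\in(2,4)$ the polynomial correction terms $\tfrac12 t^2\E(\cdot)^2$ also cancel because $\E X=\E Y$ forces the identity $\E(X-Y)^2=\tfrac12\E(X_1-X_2)^2+\tfrac12\E(Y_1-Y_2)^2=\Var X+\Var Y$. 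The computation then collapses to the single identity
\[
D_p=-\frac{1}{2C_{1,p}}\int_\R\frac{|f_X(t)-f_Y(t)|^2}{|t|^{p+1}}\,dt,
\]
valid throughout $p\in(0,2)\cup(2,4)$. From \eqref{eq:C_d,p} one reads off $C_{1,p}=-\pi/[\sin(\pi p/2)\,\Ga(p+1)]$, so $C_{1,p}<0$ for $p\in(0,2)$ and $C_{1,p}>0$ for $p\in(2,4)$, delivering $D_p\ge0$ and $D_p\le0$ respectively.

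The even-integer endpoints $p=2$ and $p=4$ are excluded from Theorem~\ref{th:cf} and need a separate treatment. At $p=2$ I would just expand, obtaining $D_2=(\E X-\E Y)^2\ge0$ (which equals $0$ under the mean condition); at $p=4$ a direct fourth-moment expansion yields $D_4=4(\E X-\E Y)(\E X^3-\E Y^3)-3(\E X^2-\E Y^2)^2$, which simplifies to $-3(\E X^2-\E Y^2)^2\le0$ under $\E X=\E Y$. Alternatively, $p=4$ can be handled by continuity of $p\mapsto D_p$ at $4$, justified by the finite fourth moments. I expect the main technical obstacle to be the bookkeeping for the cancellation of the correction terms in the range $p\in(2,4)$, together with checking integrability of $|f_X-f_Y|^2/|t|^{p+1}$ near $t=0$; the latter is precisely what the mean assumption buys, since it forces $f_X(t)-f_Y(t)=O(t^2)$ at the origin.
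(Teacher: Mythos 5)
Your proposal is correct and follows essentially the same route as the paper: the paper reduces to the real-valued case via Theorem~\ref{th:general} (linear invariance of the assumed properties) and then cites Theorem~1 of \cite{contrast-spread-published} for $H=\R$, whose proof is precisely the characteristic-function identity $D_p=-\frac1{2C_{1,p}}\int_\R |f_X(t)-f_Y(t)|^2\,|t|^{-p-1}\,dt$ that you re-derive from Corollary~\ref{cor:0<p<2} and Theorem~\ref{th:cf} (the paper itself points to this as the alternative argument). Your explicit handling of the even endpoints $p=2,4$ by direct moment expansion is correct and merely fills in a detail the paper delegates to the cited one-dimensional result.
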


Here $\|X_1-X_2\|^p$ measures the spread of a vector measurement within an $X$ population, and similarly $\|Y_1-Y_2\|^p$ measures the spread of a vector measurement within a $Y$ population, so that $\tfrac12(\E\|X_1-X_2\|^p+\E\|Y_1-Y_2\|^p)$ is the average spread within the two populations. On the other hand, $\|X-Y\|^p$ measures the contrast between vector measurements in the $X$ population and in the $Y$ population. We see that the comparison between the ``average spread within'' and the ``contrast between'' depends on whether $p$ is less or greater than $2$. 

The particular case of Corollary~\ref{cor:spread} with $H=\R$ is Theorem~1 in \cite{contrast-spread-published}. So, Corollary~\ref{cor:spread} follows by Theorem~\ref{th:general}, since the properties of $(X,Y,X_1,X_2,Y_1,Y_2)$ assumed in the first paragraph of Corollary~\ref{cor:spread} are linearly invariant. 

Corollary~\ref{cor:spread} can also be deduced from Theorem~\ref{th:cf}. Indeed, repeat the proof of Theorem~1 in \cite{contrast-spread-published} almost literally -- except for using (i)  Theorem~\ref{th:cf} of the present paper instead of formula (4) in \cite{positive} and (ii) the characteristic functionals instead of the corresponding characteristic functions. Thus, we will prove Corollary~\ref{cor:spread} for $H=\R^d$ and therefore for any finite-dimensional Euclidean space. Finally, to complete the proof of Corollary~\ref{cor:spread}, reason as in the last paragraph of Section~\ref{bahr-esseen}. 

The special case of Corollary~\ref{cor:spread} for $p=1$ and $H=\R^d$ is part of Corollary~2 in \cite{contrast-spread-published}.

%
%


%

%


\bibliographystyle{amsplain}

\bibliography{C:/Users/ipinelis/Documents/pCloudSync/mtu_pCloud_02-02-17/bib_files/citations04-02-21}

\end{document}